\def\@journal@url{http://www.vmsta.org}
\def\@credit{%
  \vbox to 0pt{%
    \vskip-1.85pc
    \hskip-\textwidth
    \noindent
    \raise4mm\hbox to \textwidth{%
      \footnotesize
      \href{\@journal@url}{www.vmsta.org}%
      \hfill
      \href{\@vtex@url}{\includevtexlogo}%
      }%
    }%
  }
\newcommand{\rrvert}{\vert}
\newcommand{\llvert}{\vert}
\newtheorem{thm}{Theorem}
\newtheorem{lemma}{Lemma}
\newtheorem{prop}{Proposition}
\theoremstyle{definition}
\newtheorem{defin}{Definition}
\newtheorem{remark}{Remark}
\newcolumntype{d}[1]{D{.}{.}{#1}}
\def\newqedhere{}
\def\figend{\end{figure}\begin{figure}}
\def\newqedhere{\qedhere}
\def\figend{\hfill}
\begin{document}

\begin{frontmatter}
\pretitle{Research Article}

\title{Fractional Cox--Ingersoll--Ross process with non-zero~<<mean>>}

\author{\inits{Yu.}\fnms{Yuliya}~\snm{Mishura}\ead[label=e1]{myus@univ.kiev.ua}}
\author{\inits{A.}\fnms{Anton}~\snm{Yurchenko-Tytarenko}\thanksref
{cor1}\ead[label=e2]{antonyurty@gmail.com}}
\thankstext[type=corresp,id=cor1]{Corresponding author.}
\address{Faculty of Mechanics and Mathematics, \institution{Taras
Shevchenko National University of Kyiv}, Volodymyrska St., 64/13, Kyiv
01601, \cny{Ukraine}}



\markboth{Yu. Mishura, A. Yurchenko-Tytarenko}{Fractional
Cox--Ingersoll--Ross process with non-zero <<mean>>}

\begin{abstract}
In this paper we define the fractional Cox--Ingersoll--Ross process
as $X_t:=\break Y_t^2\mathbf{1}_{\{t<\inf\{s>0:Y_s=0\}\}}$, where the process
$Y=\{Y_t,t\ge0\}$ satisfies the SDE of the form
$dY_t=\frac{1}{2}(\frac{k}{Y_t}-aY_t)dt+\frac{\sigma}{2}dB_t^H$, $\{B^H_t,
t\ge0\}$ is a fractional Brownian motion with an arbitrary Hurst parameter
$H\in(0,1)$. We prove that $X_t$ satisfies the stochastic differential
equation of the form $dX_t=(k-aX_t)dt+\sigma\sqrt{X_t}\circ dB_t^H$, where
the integral with respect to fractional Brownian motion is considered as
the pathwise Stratonovich integral. We also show that for $k>0$, $H>1/2$
the process is strictly positive and never hits zero, so that actually
$X_t=Y_t^2$. Finally, we prove that in the case of $H<1/2$ the probability
of not hitting zero on any fixed finite interval by the fractional
Cox--Ingersoll--Ross process tends to 1 as $k\rightarrow\infty$.
\end{abstract}
\begin{keywords}
\kwd{Fractional Cox--Ingersoll--Ross process}
\kwd{stochastic differential equation}
\kwd{Stratonovich integral}
\end{keywords}
\begin{keywords}[MSC2010]%
\kwd{60G22}
\kwd{60H05}
\kwd{60H10}
\end{keywords}

\received{\sday{26} \smonth{9} \syear{2017}}
\revised{\sday{29} \smonth{1} \syear{2018}}
\accepted{\sday{30} \smonth{1} \syear{2018}}
\publishedonline{\sday{5} \smonth{3} \syear{2018}}
\end{frontmatter}

\section{Introduction}\label{sec1}

The classical Cox--Ingersoll--Ross (CIR) process, which was
proposed and studied by Cox, Ingersoll and Ross in \cite{CIR1,CIR2,CIR3}, is the process $r=\{r_t, t\geq0\}$ that satisfies the following stochastic differential equation:
\begin{equation}
\label{standard sde} %
\begin{gathered}
dr_t=(k-ar_t)dt+\sigma\sqrt{r_t}dW_t,\quad a,k,\sigma>0.
\end{gathered}
\end{equation}

Here $a$ corresponds to the speed of adjustment, $k/a$ is called ``the
mean'', $\sigma$ is ``the volatility'', $W=\{W_t,t\ge0\}$~is a Wiener
process and $r_0>0$.

The CIR process is widely used for short-term interest rate modeling as
well as for stochastic volatility modeling in the Heston model \cite
{Hest}. Therefore, in most cases it is also assumed that $2k\ge\sigma
^2$ as, according to \cite{Fel}, if this condition holds, the process
is strictly positive and never hits zero.

It is well known that the CIR process is ergodic and has a stationary
distribution. Moreover, the distribution of its future values $r_{t+T}$
provided that $r_t$ is known is a noncentral chi-square distribution,
and the distribution of the limit value $r_{\infty}$ is a gamma distribution.

However, the real financial models are often characterized by the
so-called ``memory phenomenon'' (see \cite{AI,BM,DGE,YMH} for more
detail), while the standard Cox--Ingersoll--Ross process does not display
it. Therefore, for a better simulation of interest rates or stochastic
volatility it is reasonable to consider a fractional generalization of
the Cox--Ingersoll--Ross process. It should be noted that there are
several approaches to definition of the fractional Cox--Ingersoll--Ross
process. In \cite{LMS1,LMS2} the fractional CIR process is
introduced as a time changed CIR process with inverse stable
subordinator, the so-called ``rough path approach'' is described in
\cite{Mar}. Another way of defining the considered process is presented
in \cite{ER} as part of the discussion on rough Heston models.

The definition of the fractional CIR process with $k=0$, based on the
pathwise integration with respect to fractional Brownian motion, was
also presented in \cite{MMS} for the case $H>2/3$. In \cite{MPRY} it
was shown that for such definition the fractional CIR process is the
square of the fractional Ornstein--Uhlenbeck process until the
first zero hitting (for definition and properties of the fractional
Ornstein--Uhlenbeck process see \cite{Cher}). Based on that, the
fractional CIR process with $k=0$ was defined as the square of the
fractional Ornstein--Uhlenbeck process before its first zero
hitting. It was also shown that such process satisfies the stochastic
differential equation of the form
\begin{equation}
\begin{gathered} dX_t=aX_tdt+\sigma
\sqrt{X_t}\circ dB_t^H, \quad t\ge0,
\end{gathered} %
\end{equation}
where $X_0>0, a\in\mathbb{R},\sigma>0, H\in(0,1)$ and integral with
respect to the fractional Brownian motion is the pathwise Stratonovich
integral. However, due to positive probability of hitting zero, this
process is not suitable for interest rate modeling.

In this paper we introduce a natural generalization of the above model.
First, we consider the process $Y=\{Y_t,t\ge0\}$ which satisfies the
SDE of the form
\begin{equation}
\begin{gathered}
dY_t=\frac{1}{2} \biggl(\frac{k}{Y_t}-aY_t \biggr)dt+\frac{\sigma}{2}dB_t^H, \quad Y_0>0.
\end{gathered} %
\end{equation}
Then, we define the fractional Cox--Ingersoll--Ross process as
the square of $Y_t$ until the first zero hitting moment and show that
it satisfies the SDE of the form
\begin{equation}
\begin{gathered} dX_t=(k-aX_t)dt+\sigma
\sqrt{X_t}\circ dB_t^H, \quad t\ge0,
\end{gathered} %
\end{equation}
where $X_0=Y^2_0>0$ and the integral with respect to the fractional
Brownian motion is defined as the pathwise Stratonovich integral. We
also show that for any $k>0$ and for any Hurst parameter $H>1/2$ the
process is strictly positive and never hits zero.

Next, for the case of $H<1/2$, we prove that the probability that the
fractional CIR process does not hit zero on any fixed finite interval
tends to 1 as $k\rightarrow\infty$. As an auxiliary result, we prove
the analogue of the comparison theorem.

The paper is organized as follows. In Section~\ref{sec2} we define the
fractional CIR process and show that it satisfies the SDE \eqref{eq:
fCIR for intro} with the pathwise Stratonovich integral. In Section~\ref{sec3} we prove that the fractional CIR process is strictly positive
for $k>0$ and $H>1/2$. In Section~\ref{sec4} we prove the analogue of
the comparison theorem and show that in the case of $H<1/2$ the
probability that the fractional CIR process does not hit zero on any
fixed finite interval tends to 1 as $k\rightarrow\infty$. In Appendix
there are simulations that illustrate the results of the paper.

\section{Definition of the fractional Cox--Ingersoll--Ross
process}\label{sec2}

Consider the process $Y=\{Y_t,t\ge0\}$ that satisfies the following SDE
until its first zero hitting:
\begin{equation}
\label{new sde} %
\begin{gathered} dY_t=\frac{1}{2}
\biggl(\frac{k}{Y_t}-aY_t \biggr)dt+\frac{\sigma
}{2}dB_t^H,
\quad Y_0>0, \end{gathered} %
\end{equation}
where $a,k\in\mathbb{R}, \sigma>0$ and $\{B^H_t,t\ge0\}$ is a
fractional Brownian motion with the Hurst parameter $H\in(0,1)$.

\begin{defin}\label{generfCIR} Let $H\in(0,1)$ be an arbitrary Hurst
index, $\{Y_t, t\ge0\}$ be the process that satisfies the equation
\eqref{new sde} and $\tau$ be the first moment of reaching zero by
the latter.
The \emph{fractional Cox--Ingersoll--Ross} process is the
process $\{X_t, t\ge0\}$ such that for all $t\ge0, \omega\in\varOmega$:
\begin{equation}
\label{def: gfCIR} X_t(\omega) = Y_t^2(\omega)
\mathbf{1}_{\{t<\tau(\omega)\}}.
\end{equation}
\end{defin}

Before moving to the main result of this section, let us give the
definition of the pathwise Stratonovich integral.
\begin{defin}
Let $\{X_t, t\ge0\}$, $\{Y_t, t\ge0\}$ be random processes.
The pathwise \emph{Stratonovich integral} $\int_0^TX_s\circ dY_s$ is a
pathwise limit of the following sums
\begin{equation*}
\sum_{k=1}^{n} \frac{X_{t_{k}} + X_{t_{k-1}}}{2}
(Y_{t_{k}} - Y_{t_{k-1}} ),
\end{equation*}
as the mesh of the partition
$0=t_0<t_1<t_2<\cdots<t_{n-1}<t_n=T$
tends to zero, in case if this limit exists.
\end{defin}

\begin{thm}
Let $\tau:=\inf\{s>0:Y_s=0\}$. For $0\le t \le\tau$ the fractional CIR
process from Definition~\ref{generfCIR} satisfies the following SDE:
\begin{equation}
\begin{gathered}\label{eq: fCIR for intro} dX_t=(k-aX_t)dt+
\sigma\sqrt{X_t}\circ dB_t^H, \end{gathered}
\end{equation}
where $X_0=Y^2_0>0$ and the integral with respect to the fractional
Brownian motion is defined as the pathwise Stratonovich integral.
\end{thm}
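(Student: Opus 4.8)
The plan is to apply the ordinary chain rule — legitimate because the pathwise Stratonovich integral obeys the usual rules of calculus — to the map $f(y)=y^{2}$ along the trajectory of $Y$. Formally $d(Y_t^2)=2Y_t\circ dY_t$, and substituting \eqref{new sde} gives
\[
d\bigl(Y_t^2\bigr)=2Y_t\Bigl(\tfrac12\bigl(\tfrac{k}{Y_t}-aY_t\bigr)dt+\tfrac{\sigma}{2}\,dB_t^H\Bigr)=\bigl(k-aY_t^2\bigr)dt+\sigma Y_t\circ dB_t^H .
\]
Since $Y_0>0$ and $Y$ has continuous paths, $Y_t>0$ for $t<\tau$, so on $[0,\tau)$ we have $X_t=Y_t^2$ and $\sqrt{X_t}=Y_t$; substituting these turns the display into \eqref{eq: fCIR for intro}. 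To make this rigorous without invoking a general It\^o--Stratonovich formula, I would run the argument directly at the level of Riemann sums.

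Fix $t<\tau$ and observe that $m:=\inf_{0\le s\le t}Y_s>0$ by continuity, so $k/Y_s\le k/m$ on $[0,t]$. For a partition $0=t_0<t_1<\dots<t_n=t$, telescope
\[
Y_t^2-Y_0^2=\sum_{i=1}^{n}\bigl(Y_{t_i}-Y_{t_{i-1}}\bigr)\bigl(Y_{t_i}+Y_{t_{i-1}}\bigr),
\]
and insert $Y_{t_i}-Y_{t_{i-1}}=\tfrac12\int_{t_{i-1}}^{t_i}\bigl(\tfrac{k}{Y_s}-aY_s\bigr)ds+\tfrac{\sigma}{2}\bigl(B_{t_i}^H-B_{t_{i-1}}^H\bigr)$ from \eqref{new sde}. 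This decomposes $Y_t^2-Y_0^2=S_n^{(1)}+S_n^{(2)}$, where $S_n^{(1)}=\sum_i\bigl(Y_{t_i}+Y_{t_{i-1}}\bigr)\tfrac12\int_{t_{i-1}}^{t_i}\bigl(\tfrac{k}{Y_s}-aY_s\bigr)ds$ is the drift contribution and $S_n^{(2)}=\sigma\sum_i\tfrac{Y_{t_i}+Y_{t_{i-1}}}{2}\bigl(B_{t_i}^H-B_{t_{i-1}}^H\bigr)$ is the noise contribution. Writing $\omega$ for the modulus of continuity of $Y$ on $[0,t]$, the bound $\lvert Y_{t_i}+Y_{t_{i-1}}-2Y_s\rvert\le 2\omega(\lvert t_i-t_{i-1}\rvert)$ for $s\in[t_{i-1},t_i]$ together with $k/Y_s\le k/m$ shows that $S_n^{(1)}\to\int_0^t\bigl(k-aY_s^2\bigr)ds$ as the mesh tends to $0$, along any sequence of partitions.

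The key point is now that $Y_t^2-Y_0^2$ does not depend on the partition while $S_n^{(1)}$ converges; hence $S_n^{(2)}$ converges as well, and by definition its limit is $\sigma\int_0^t Y_s\circ dB_s^H$ — so in particular this pathwise Stratonovich integral exists. Letting the mesh tend to $0$ yields $Y_t^2=Y_0^2+\int_0^t\bigl(k-aY_s^2\bigr)ds+\sigma\int_0^t Y_s\circ dB_s^H$, which is \eqref{eq: fCIR for intro} on $[0,\tau)$ once one recalls $X_s=Y_s^2$ and $\sqrt{X_s}=Y_s$ there. For $t=\tau$ one passes to the limit $t\uparrow\tau$: $X_t=Y_t^2\to0=X_\tau$ by continuity of $Y$, the drift integral converges because $k-aY_s^2$ is bounded on the compact interval $[0,\tau]$, and the Stratonovich integral up to $\tau$ is the corresponding limit. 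I expect the only genuine subtlety to be the control of $S_n^{(1)}$ near $t=\tau$, where $Y_s\downarrow0$ and $k/Y_s$ blows up; this is exactly why the argument is first carried out on $[0,\tau)$ and then extended by continuity rather than directly on the closed interval. (Existence of a process $Y$ solving \eqref{new sde} is taken as given.)
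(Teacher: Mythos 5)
Your proof is correct and follows essentially the same route as the paper: telescope $Y_t^2-Y_0^2$ over a partition, split the increments via \eqref{new sde} into a drift sum and a Stratonovich-type sum, and use that the left-hand side is partition-independent while the drift sums converge to a Riemann integral, so the remaining sums must converge and by definition yield $\sigma\int_0^t Y_s\circ dB_s^H$. Your write-up is merely a streamlined version (keeping $Y_{t_i}+Y_{t_{i-1}}$ unexpanded gives two sums instead of the paper's six) and in addition handles the endpoint $t=\tau$ by letting $t\uparrow\tau$, a point the paper leaves implicit.
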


\begin{proof}
Let us fix an $\omega\in\varOmega$ and consider an arbitrary $t<\tau
(\omega)$.

According to \eqref{new sde} and \eqref{def: gfCIR},
\begin{equation}
\label{eq: 1} X_t = Y_t^2 = \Biggl(
\sqrt{X_0} + \frac{1}{2}\int_0^t
\biggl(\frac
{k}{Y_s}-aY_s \biggr)ds+\frac{\sigma}{2}B_t^H
\Biggr)^2.
\end{equation}

Consider an arbitrary partition of the interval $[0,t]$:
\[
0=t_0<t_1<t_2<\cdots<t_{n-1}<t_n=t.
\]

Using \eqref{eq: 1}, we get
\begin{align*}
X_t &= \sum_{i=1}^{n}
(X_{t_i} - X_{t_{i-1}} ) + X_0
\\
&= \sum_{i=1}^n \Biggl( \Biggl[
\sqrt{X_0} + \frac{1}{2}\int_0^{t_i}
\biggl(\frac{k}{Y_s}-aY_s \biggr)ds+\frac{\sigma}{2}B_{t_i}^H
\Biggr]^2
\\
&\quad- \Biggl[\sqrt{X_0} + \frac{1}{2}\int
_0^{t_{i-1}} \biggl(\frac
{k}{Y_s}-aY_s
\biggr)ds+\frac{\sigma}{2}B_{t_{i-1}}^H \Biggr]^2
\Biggr) + \xch{X_0.}{X_0}
\end{align*}

Factoring each summand as the difference of squares, we get:
\begin{align*}
X_t &= X_0+\sum_{i=1}^n
\Biggl[2\sqrt{X_0} + \frac{1}{2} \Biggl(\int
_0^{t_{i}} \biggl(\frac{k}{Y_s}-aY_s
\biggr)ds
\\
&\quad+\int_0^{t_{i-1}} \biggl(\frac{k}{Y_s}-aY_s
\biggr)ds \Biggr)+\frac
{\sigma}{2} \bigl(B_{t_i}^H +
B_{t_{i-1}}^H \bigr) \Biggr]
\\
&\quad\times \Biggl[\frac{1}{2}\int_{t_{i-1}}^{t_{i}}
\biggl(\frac
{k}{Y_s}-aY_s \biggr)ds + \frac{\sigma}{2}
\bigl(B_{t_i}^H - B_{t_{i-1}}^H \bigr)
\Biggr].
\end{align*}

Expanding the brackets in the last expression, we obtain:
\begin{align}
X_t &= X_0+\sum_{i=1}^n \sqrt{X_0}\int_{t_{i-1}}^{t_{i}} \biggl(\frac{k}{Y_s}-aY_s\biggr)ds\nonumber\\
&\quad +\frac{1}{4}\sum_{i=1}^n \Biggl(\int_0^{t_{i}} \biggl(\frac{k}{Y_s}-aY_s\biggr)ds+\int_0^{t_{i-1}} \biggl(\frac{k}{Y_s}-aY_s \biggr)ds \Biggr)\nonumber\\
&\quad \times\int_{t_{i-1}}^{t_{i}} \biggl(\frac{k}{Y_s}-aY_s\biggr)ds+\frac{\sigma}{4}\sum_{i=1}^n\bigl(B_{t_i}^H + B_{t_{i-1}}^H \bigr)\int_{t_{i-1}}^{t_{i}} \biggl(\frac{k}{Y_s}-aY_s\biggr)ds\nonumber\\
&\quad + \sigma\sqrt{X_0}\sum_{i=1}^n\bigl(B_{t_i}^H - B_{t_{i-1}}^H \bigr)+\frac{\sigma^2}{4}\sum_{i=1}^n\bigl(B_{t_i}^H - B_{t_{i-1}}^H \bigr)\bigl(B_{t_i}^H + B_{t_{i-1}}^H \bigr)\nonumber\\
&\quad + \frac{\sigma}{4}\sum_{i=1}^n \Biggl(\int_0^{t_{i}} \biggl(\frac{k}{Y_s}-aY_s\biggr)ds+\int_0^{t_{i-1}} \biggl(\frac{k}{Y_s}-aY_s \biggr)ds \Biggr) \bigl(B_{t_i}^H- B_{t_{i-1}}^H \bigr).\label{integral sums 2}
\end{align}

Let the mesh $\Delta t$ of the partition tend to zero. The first three summands
\begin{align}
&\sum_{i=1}^n\sqrt{X_0}\int_{t_{i-1}}^{t_{i}} \biggl(\frac{k}{Y_s}-aY_s \biggr)ds \nonumber\\
&\qquad +\frac{1}{4}\sum_{i=1}^n \Biggl(\int_0^{t_{i}} \biggl(\frac{k}{Y_s}-aY_s\biggr)ds+\int_0^{t_{i-1}} \biggl(\frac{k}{Y_s}-aY_s \biggr)ds \Biggr)\nonumber\\
&\qquad \times\int_{t_{i-1}}^{t_{i}} \biggl(\frac{k}{Y_s}-aY_s\biggr)ds+\frac{\sigma}{4}\sum_{i=1}^n\bigl(B_{t_i}^H + B_{t_{i-1}}^H \bigr)\int_{t_{i-1}}^{t_{i}} \biggl(\frac{k}{Y_s}-aY_s\biggr)ds\nonumber\\
&\quad \rightarrow\int_0^t\biggl(\frac{k}{Y_s}-aY_s \biggr) \Biggl(\sqrt{X_0}+ \frac{1}{2}\int_0^s \biggl(\frac{k}{Y_u}-aY_u \biggr)du+\frac{\sigma}{2}B_s^H\Biggr)ds\nonumber\\
&\quad =\int_0^t \bigl(k-aY_s^2\bigr)ds=\int_0^t (k-aX_s )ds,\quad \Delta t\rightarrow0, \label{lebesgue sums 2}
\end{align}
and the last three summands
\begin{align}
&\sigma\sqrt{X_0}\sum_{i=1}^n \bigl(B_{t_i}^H- B_{t_{i-1}}^H \bigr)+\frac{\sigma^2}{4}\sum_{i=1}^n \bigl(B_{t_i}^H -B_{t_{i-1}}^H \bigr) \bigl(B_{t_i}^H +B_{t_{i-1}}^H \bigr)\nonumber\\
&\qquad + \frac{\sigma}{4}\sum_{i=1}^n \Biggl(\int_0^{t_{i}} \biggl(\frac{k}{Y_s}-aY_s\biggr)ds+\int_0^{t_{i-1}} \biggl(\frac{k}{Y_s}-aY_s \biggr)ds \Biggr) \bigl(B_{t_i}^H- B_{t_{i-1}}^H \bigr) \nonumber\\
&\quad \rightarrow\sigma\int_0^t \Biggl(\sqrt{X_0} + \frac{1}{2}\int_0^s\biggl(\frac{k}{Y_u}-aY_u \biggr)du+\frac{\sigma}{2}B_s^H\Biggr)\circ dB_s^H \nonumber\\
&\quad =\sigma\int_0^t Y_s \circ dB_s^H = \sigma\int_0^t\sqrt{X_s}\circ dB_s^H,\quad\Delta t \rightarrow0.\label{stratonovich sums 2}
\end{align}

Note that the left-hand side of \eqref{integral sums 2} does not depend
on the partition and the limit in \eqref{lebesgue sums 2} exists as the
pathwise Riemann integral, therefore the corresponding pathwise
Stratonovich integral exists and the passage to the limit in \eqref
{stratonovich sums 2} is correct.

Thus, the fractional Cox--Ingersoll--Ross process, introduced
in Definition~\ref{generfCIR}, satisfies the SDE of the form
\begin{equation}
\begin{gathered}\label{stratonovich equation 2} X_t = X_0 +
\int_0^t (k-aX_s )ds+\sigma\int
_0^t\sqrt{X_s}\circ
dB_s^H, \end{gathered} %
\end{equation}
where $\int_0^t\sqrt{X_s}\circ dB_s^H$ is the pathwise Stratonovich integral.
\end{proof}

\begin{remark}
In the case of $k=0$, the process \eqref{new sde} is the fractional
Ornstein--Uhlenbeck process and the definition coincides with the one
given in \cite{MPRY}.
\end{remark}

\section{Hitting zero by the fractional CIR process with positive
``mean'' and $\mathbf{H>1/2}$}\label{sec3}

The next natural question regarding the fractional CIR process is
finiteness of its zero hitting time moment. It is obvious that it
coincides with the respective moment of the process $\{Y_t,t\ge0\}$,
defined by the equation \eqref{new sde}.

Before formulating the main result of the section let us give a
well-known property of trajectories of fractional Brownian motion (see,
for example, \cite{MishurafBm}).

\begin{prop}\label{fBm property}
Let $\{B_t^H,t\ge0\}$ be a fractional Brownian motion with the Hurst
index $H$. Then, $\exists\varOmega'\subset\varOmega$, $\mathbb P \{
\varOmega
'\}
=1$, such that $\forall\omega\in\varOmega'$, $\forall T>0$, $\forall
\delta>0$, $\forall0 \le s \le t \le T$ $\exists C=C(T,\omega,
\delta
)\in\mathbb{R}$:
\begin{equation*}
\bigl\llvert B_t^H-B_s^H \bigr
\rrvert\le C\llvert t-s\rrvert^{H-\delta}.
\end{equation*}
\end{prop}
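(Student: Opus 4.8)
The statement is the standard Hölder regularity of fractional Brownian motion, and the plan is to derive it from the Kolmogorov--Chentsov continuity theorem together with a countable-intersection argument that makes the exceptional null set independent of $T$ and $\delta$.

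First I would record the Gaussian moment estimate on the increments. Since $B^H$ is centered Gaussian with $B_0^H=0$ and $\mathbb E(B_t^H-B_s^H)^2=|t-s|^{2H}$, the increment $B_t^H-B_s^H$ is centered normal with variance $|t-s|^{2H}$, hence for every $p>0$
\[
\mathbb E\bigl|B_t^H-B_s^H\bigr|^p=c_p\,|t-s|^{pH},\qquad c_p:=\mathbb E|Z|^p,\quad Z\sim N(0,1).
\]
Fix $T>0$ and $\delta\in(0,H)$, and choose $p>1/\delta$ (so in particular $p>1/H$). Then the bound above has the form $c_p|t-s|^{1+\beta}$ with $\beta=pH-1>0$, so Kolmogorov--Chentsov yields a modification of $B^H$ on $[0,T]$ whose trajectories are almost surely Hölder continuous of every order $\gamma<\beta/p=H-1/p$; since $p>1/\delta$ gives $H-1/p>H-\delta$, we obtain in particular that for $\omega$ in a set $\varOmega_{T,\delta}$ of full probability there is a finite $C=C(T,\omega,\delta)$ with $|B_t^H-B_s^H|\le C|t-s|^{H-\delta}$ for all $0\le s\le t\le T$. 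Because fractional Brownian motion is, by convention, taken with continuous paths, this modification is indistinguishable from $B^H$, so the estimate holds for the process itself.

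Next I would eliminate the dependence of the exceptional set on $T$ and $\delta$. Pick sequences $T_m\uparrow\infty$ and $\delta_n\downarrow0$ with each $\delta_n\in(0,H)$, and set $\varOmega':=\bigcap_{m,n}\varOmega_{T_m,\delta_n}$, which still has probability one as a countable intersection. Given arbitrary $T>0$ and $\delta>0$, choose $m$ with $T_m\ge T$ and $n$ with $\delta_n<\delta$; on $\varOmega'$ we have $|B_t^H-B_s^H|\le C(T_m,\omega,\delta_n)|t-s|^{H-\delta_n}$ for $0\le s\le t\le T_m$, and since $|t-s|^{H-\delta_n}=|t-s|^{H-\delta}|t-s|^{\delta-\delta_n}\le T^{\delta-\delta_n}|t-s|^{H-\delta}$ on $[0,T]$, absorbing $T^{\delta-\delta_n}$ into the constant produces exactly the bound in the statement, with a constant depending only on $T$, $\omega$, $\delta$.

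The only point requiring any care is the passage from ``continuous modification'' to the process $B^H$ itself, i.e.\ making sure the Hölder estimate attaches to the given trajectories rather than to some other version; this is harmless once one fixes the continuous version of fBm, since any two continuous modifications of the same process agree off a null set. An entirely self-contained alternative that avoids even this remark is to apply the Garsia--Rodemich--Rumsey inequality directly to the a.s.\ continuous paths of $B^H$ with $\Psi(x)=|x|^p$ and $p(u)=u^{\alpha}$ for $\alpha$ slightly below $H+1/p$: the finiteness of $\mathbb E\int_0^T\!\int_0^T|B_t^H-B_s^H|^p|t-s|^{-p\alpha}\,ds\,dt$, again via the Gaussian moment formula, gives for a.e.\ $\omega$ an explicit Hölder modulus of order $\alpha-2/p$, which exceeds $H-\delta$ once $p$ is large; the same countable intersection over $p$ and $T$ then concludes.
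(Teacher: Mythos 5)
Your argument is correct. Note, however, that the paper does not actually prove this proposition: it is stated as a well-known property of fBm trajectories and justified only by a citation to Mishura's book \cite{MishurafBm}, so there is no ``paper proof'' to match against. Your write-up supplies exactly the standard argument that such a citation stands in for: the Gaussian moment identity $\mathbb E|B_t^H-B_s^H|^p=c_p|t-s|^{pH}$, Kolmogorov--Chentsov with $p$ large enough that $H-1/p>H-\delta$, and then a countable intersection over $T_m\uparrow\infty$ and $\delta_n\downarrow 0$ to produce a single full-measure set $\varOmega'$, with the harmless absorption $|t-s|^{\delta-\delta_n}\le T^{\delta-\delta_n}$ into the constant (which, as a bonus, also covers the degenerate range $\delta\ge H$ without further comment). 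Your two points of care are well placed: the modification-versus-process issue is exactly the step that is usually glossed over, and the Garsia--Rodemich--Rumsey variant you sketch (with $\Psi(x)=|x|^p$, $p(u)=u^{\alpha}$, $\alpha$ just below $H+1/p$) is a clean way to avoid it entirely since it applies pathwise to the already-continuous trajectories. The exponents check out in both versions ($\beta/p=H-1/p$ in Kolmogorov--Chentsov, $\alpha-2/p<H-1/p$ in GRR), and your uniform-in-$(s,t)$ constant is in fact slightly stronger than the quantifier order in the statement requires.
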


\begin{thm}\label{main result}
Let $k>0, H>1/2$. Then the process $\{Y_t,t\ge0\}$, defined by the
equation \eqref{new sde}, is strictly positive a.s.
\end{thm}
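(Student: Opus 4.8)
The plan is to argue by contradiction. Assume that with positive probability the first zero-hitting time $\tau:=\inf\{s>0:Y_s=0\}$ is finite, and work on the full-measure event $\varOmega'$ from Proposition~\ref{fBm property}. Fix $\omega\in\varOmega'\cap\{\tau<\infty\}$; by continuity of the trajectory, $Y_t>0$ for $t\in[0,\tau)$ and $Y_\tau=0$. Choose $\delta\in(0,H-1/2)$, so that $H-\delta>1/2$, and let $C=C(\tau,\omega,\delta)$ be the Hölder constant from Proposition~\ref{fBm property} on $[0,\tau]$.

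First I would record a backward integral representation of $Y$ near $\tau$. Writing \eqref{new sde} in integral form on $[t_1,t_2]$ with $t_1<t_2<\tau$ and letting $t_2\uparrow\tau$, continuity of $Y$ and $B^H$ forces $\int_{t_1}^{t_2}\frac{k}{Y_s}\,ds$ to converge to a finite limit; hence, for every $t\in[0,\tau)$,
\[
Y_t=-\frac{k}{2}\int_t^\tau\frac{ds}{Y_s}+\frac{a}{2}\int_t^\tau Y_s\,ds-\frac{\sigma}{2}\bigl(B_\tau^H-B_t^H\bigr),
\]
and in particular $\int_t^\tau\frac{ds}{Y_s}<\infty$. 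The step I expect to be the main technical point is an a priori upper bound $Y_s\le C'(\tau-s)^{H-\delta}$ on a left-neighborhood $[t_0,\tau]$ of $\tau$. Discarding the nonpositive term $-\frac{k}{2}\int_s^\tau\frac{du}{Y_u}$ in the representation and applying the Hölder bound gives $Y_s\le\frac{|a|}{2}\int_s^\tau Y_u\,du+\frac{\sigma C}{2}(\tau-s)^{H-\delta}$; setting $g(s):=\sup_{u\in[s,\tau]}Y_u$ (finite and non-increasing) and choosing $t_0$ close enough to $\tau$ that $\frac{|a|}{2}(\tau-t_0)\le\frac12$, a Gronwall-type rearrangement yields $g(s)\le\sigma C(\tau-s)^{H-\delta}=:C'(\tau-s)^{H-\delta}$ for $s\in[t_0,\tau]$.

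Finally I would combine the two directions. From the a priori bound, $\frac1{Y_s}\ge\frac1{C'}(\tau-s)^{-(H-\delta)}$, so for $t\in[t_0,\tau)$,
\[
\int_t^\tau\frac{ds}{Y_s}\ge\frac{(\tau-t)^{1-H+\delta}}{C'(1-H+\delta)},
\]
which is legitimate since $1-(H-\delta)>0$. On the other hand, rearranging the representation, discarding the nonpositive $-Y_t$, and invoking the a priori bound once more to control $\int_t^\tau Y_s\,ds$, one gets $\frac{k}{2}\int_t^\tau\frac{ds}{Y_s}\le C''(\tau-t)^{H-\delta}$ for $\tau-t$ small. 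Combining the two estimates and dividing by $(\tau-t)^{1-H+\delta}>0$ yields
\[
\frac{k}{2C'(1-H+\delta)}\le C''(\tau-t)^{2(H-\delta)-1}.
\]
Since $\delta<H-1/2$, the exponent $2(H-\delta)-1$ is strictly positive, so letting $t\uparrow\tau$ the right-hand side tends to $0$ while the left-hand side is a fixed positive constant — here the hypothesis $k>0$ is essential. This contradiction shows $\mathbb{P}(\tau<\infty)=0$, i.e. $Y$ is strictly positive a.s. The places to be careful are: justifying the backward representation (finiteness of $\int_t^\tau\frac{ds}{Y_s}$), and the Gronwall step producing the sharp $(\tau-s)^{H-\delta}$ decay of $Y$ near $\tau$ — everything else is a comparison of powers of $\tau-t$ made possible by $H>1/2$.
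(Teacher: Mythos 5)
Your proof is correct, but it takes a genuinely different route from the paper's. The paper also argues pathwise by contradiction using the H\"older property of $B^H$ with exponent $H-\delta>1/2$, but instead of analysing the decay of $Y$ at $\tau$ it introduces, for small $\varepsilon>0$, the last passage time $\tau_\varepsilon=\sup\{t\in(0,\tau):Y_t=\varepsilon\}$; on $(\tau_\varepsilon,\tau)$ the drift is bounded below by $\frac{k}{2\varepsilon}$ for $\varepsilon$ small, so writing the increment $Y_\tau-Y_{\tau_\varepsilon}=-\varepsilon$ through the equation gives $\frac{\sigma}{2}C\,x^{H-\delta}\ge\frac{k}{4\varepsilon}x+\varepsilon$ with $x=\tau-\tau_\varepsilon$, and minimizing the convex function $F_\varepsilon(x)=\frac{k}{4\varepsilon}x-\frac{\sigma}{2}Cx^{H-\delta}+\varepsilon$ shows this is impossible once $\varepsilon$ is small, because its minimum value is $\varepsilon-K\varepsilon^{\frac{H-\delta}{1+\delta-H}}$ with exponent $\frac{H-\delta}{1+\delta-H}>1$. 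You replace the last-passage time and the one-variable optimization by a bootstrap: first the a priori decay $Y_s\le \sigma C(\tau-s)^{H-\delta}$ near $\tau$ (your sup-rearrangement is fine --- it is really a contraction estimate rather than Gronwall), then the comparison of the lower bound of order $(\tau-t)^{1-H+\delta}$ with the upper bound of order $(\tau-t)^{H-\delta}$ for $\int_t^\tau \frac{ds}{Y_s}$, the exponent gap $2(H-\delta)-1>0$ yielding the contradiction; the hypotheses $k>0$ and $H>1/2$ enter in the same places as in the paper. What your route buys: it handles $a>0$ and $a\le0$ uniformly through $|a|$ (the paper treats them separately), and it gives the quantitative byproducts that a hypothetical zero would force $Y_s\lesssim(\tau-s)^{H-\delta}$ and $\int_t^\tau \frac{ds}{Y_s}<\infty$. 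What the paper's route buys is brevity: a single application of the integral equation on $(\tau_\varepsilon,\tau)$ and an elementary minimization, with no need to justify the backward representation or the finiteness of the singular integral --- a point which both arguments must implicitly face when extending the equation to the endpoint $t=\tau$ with $Y_\tau=0$, and which you at least address explicitly via monotone convergence.
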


\begin{proof}
The proof is by contradiction.

Let \querymark{Q1}$\varOmega'$ be the same as in Proposition~\ref{fBm property}. First,
assume that $a>0$ and let for some $\omega\in\varOmega'$, $\tau(\omega
)=\inf
\{t>0:X_t=0\}=\inf\{t>0:Y_t=0\}<\infty$.

For all $\varepsilon\in(0,\min(Y_0,\sqrt{\frac{k}{a}}))$ (the condition
$\varepsilon< \sqrt{\frac{k}{a}}$ provides the inequality $\frac
{k}{\varepsilon}-a\varepsilon>0$) let us introduce the last moment of
hitting the level of $\varepsilon$ before the first zero reaching:
\begin{equation*}
\tau_\varepsilon:=\sup \bigl\{t\in(0,\tau):Y_t=\varepsilon \bigr
\}.
\end{equation*}

Consider $\delta>0$ such that the inequality $H-\delta>1/2
\Leftrightarrow1+\delta-H<1/2$ holds. According to the definitions of
$\tau, \tau_\varepsilon$ and $Y$, the following equality is true:
\begin{equation*}
\begin{gathered} -\varepsilon=Y_\tau-Y_{\tau_\varepsilon}=
\frac{1}{2}\int_{\tau
_\varepsilon}^\tau \biggl(
\frac{k}{Y_s}-aY_s \biggr)ds+\frac{\sigma
}{2}
\bigl(B_\tau^H-B_{\tau_\varepsilon}^H \bigr).
\end{gathered} %
\end{equation*}

The process $Y_s\in(0,\varepsilon)$ on the interval $(\tau
_\varepsilon
,\tau)$, hence $\forall s\in(\tau_\varepsilon,\tau)$:
\begin{equation}
\label{positive a} %
\begin{gathered} \frac{k}{Y_s}-aY_s\ge
\frac{k}{\varepsilon}-a\varepsilon. \end{gathered} %
\end{equation}

From this and Proposition~\ref{fBm property}, it follows that $\exists
C=C(\tau(\omega),\omega,\delta)$:
\begin{equation*}
\begin{gathered} \frac{\sigma}{2}C\llvert\tau-\tau_\varepsilon
\rrvert^{H-\delta}\ge\frac
{\sigma
}{2} \llvert B_\tau^H-B_{\tau_\varepsilon}^H
\rvert\ge\frac
{1}{2} \biggl(\frac{k}{\varepsilon}-a\varepsilon \biggr) (
\tau-\tau_\varepsilon)+\varepsilon. \end{gathered} %
\end{equation*}

It is clear that there exists $\tilde{\varepsilon}>0$ such that
$\forall\varepsilon<\tilde{\varepsilon}$: $\frac{k}{\varepsilon
}-a\varepsilon>\frac{k}{2\varepsilon}$. Then, by choosing an arbitrary
$\varepsilon<\tilde{\varepsilon}$, we have:
\begin{equation}
\label{inequality} %
\begin{gathered} \frac{\sigma}{2}C\llvert\tau-
\tau_\varepsilon\rrvert^{H-\delta}\ge\frac
{k}{4\varepsilon} (\tau-
\tau_\varepsilon)+\varepsilon. \end{gathered} %
\end{equation}

For $x\ge0$ consider the function
\begin{equation}
\begin{gathered} \label{key function} F_\varepsilon(x)=
\frac{k}{4\varepsilon}x-\frac{\sigma
}{2}Cx^{H-\delta
}+\varepsilon. \end{gathered}
\end{equation}

Let us show that there exists $\varepsilon^*\in(0,\tilde{\varepsilon})$
such that, for all $\varepsilon<\varepsilon^*$ and for all $x\ge0$,
$F_\varepsilon(x)>0$. It is easy to check that $F_\varepsilon
(0)=\varepsilon>0$ and $F_\varepsilon$ is convex on $\mathbb
{R}^+\backslash\{0\}$ (its second derivative is strictly positive on
this set), so it is enough to examine the sign of the function in its
critical points.
\begin{align*}
F'(\tilde{x})&=\frac{k}{4\varepsilon}-\frac{\sigma(H-\delta)}{2}C\tilde{x}^{H-\delta-1}=0 \\
\implies\quad \tilde{x}&= \biggl(\frac{k}{2\sigma\varepsilon C(H-\delta)} \biggr)^{1/(H-\delta-1)}\\
&= \biggl(\frac{2\sigma C (H-\delta)}{k} \biggr)^{1/(1+\delta-H)}\varepsilon^{1/ (1+\delta-H )}.
\end{align*}
After some calculations we get
\begin{equation*}
\begin{gathered} F(\tilde{x})=\frac{1}{2} \biggl(
\frac{2(H-\delta)}{k} \biggr)^{\frac
{H-\delta}{1+\delta-H}} (\sigma C )^{\frac{1}{1+\delta
-H}}(H-\delta-1)
\varepsilon^{\frac{H-\delta}{1+\delta
-H}}+\varepsilon. \end{gathered} %
\end{equation*}

From the choice of $\delta$ it follows that $\frac{H-\delta
}{1+\delta
-H}>1$, hence $\forall K\in\mathbb{R}\quad\exists\varepsilon^*>0$:
\begin{equation}
\begin{gathered} \label{key inequality} \varepsilon-K\varepsilon^{\frac
{H-\delta}{1+\delta-H}}>0,
\quad\forall\varepsilon<\varepsilon^*. \end{gathered} %
\end{equation}

Choosing the corresponding $\varepsilon^*$ for
\begin{equation*}
\begin{gathered} K:=-\frac{1}{2} \biggl(\frac{2(H-\delta)}{k}
\biggr)^{\frac{H-\delta
}{1+\delta-H}} (\sigma C )^{\frac{1}{1+\delta
-H}}
\xch{(H-\delta-1),}{(H-\delta-1)}
\end{gathered}
\end{equation*}
and choosing an arbitrary $\varepsilon<\min\{\tilde{\varepsilon
},\varepsilon^*\}$ we obtain that
\begin{equation*}
F_\varepsilon(x)>0 \quad\forall x>0.
\end{equation*}
However, from \eqref{inequality} it follows that
\begin{equation*}
F_\varepsilon(\tau-\tau_\varepsilon)\le0.
\end{equation*}

The contradiction obtained proves the theorem for $a>0$. If $a\le0$,
instead of \eqref{positive a} the following bound can be used:
\begin{equation}
\label{negative a} %
\frac{k}{Y_s}-aY_s\ge \frac{k}{\varepsilon}.\newqedhere
\end{equation}
\end{proof}

\section{Hitting zero by the fractional CIR process in the case of
$H<1/2$}\label{sec4}

The condition of $H>1/2$ is essential for Theorem~\ref{main result}, as
if $H<1/2$, the condition \eqref{key inequality} does not hold.
However, it is possible to obtain another result concerning zero
hitting by the fractional CIR process in the case of $H<1/2$.

Let $\{B_t^H, t\ge0\}$ be the fractional Brownian motion with $H<1/2$
and let $a\in\mathbb{R}$, $\sigma>0$ be fixed. Consider the set of processes
\begin{equation}
\label{set of Ys} \mathbb{Y}:= \bigl\{Y^{(k)}= \bigl\{Y_t^{(k)},
t\ge0 \bigr\}, k>0 \bigr\},
\end{equation}
each element of which starts from the same level $Y_0>0$, satisfies the
SDE of the form \eqref{new sde} before hitting zero and remains in zero
after that moment:
\begin{equation*}
Y^{(k)}_t(\omega)= %
\begin{cases}
Y_0+\frac{1}{2}\int_0^t (\frac{k}{Y^{(k)}_s(\omega)}-aY^{(k)}_s(\omega) )ds+\frac{\sigma}{2}dB_t^H(\omega), &\text{if $t<\tau^{(k)}(\omega)$}, \\
0, &\text{if $t\ge\tau^{(k)}(\omega)$},
\end{cases}
\end{equation*}
where $\tau^{(k)}:=\inf\{t\ge0 \mid Y^{(k)}_t=0\}$.

\begin{lemma}\label{comparison thm}
Let $k_1<k_2$. Then $\forall\omega\in\varOmega, \forall t\ge0$:
\begin{enumerate}
\item[(i)] $\tau^{(k_1)}(\omega)\le\tau^{(k_2)}(\omega)$;
\item[(ii)] $Y^{(k_1)}_t(\omega)\le Y^{(k_2)}_t(\omega)$, and the
inequality is strict for $t\in(0,\tau^{(k_2)}(\omega))$.
\end{enumerate}
\end{lemma}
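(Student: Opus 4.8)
The plan is to exploit that both processes $Y^{(k_1)}$ and $Y^{(k_2)}$ are driven by the \emph{same} path of $B^H$, so that on the region where both are strictly positive their difference solves a genuine (random) linear ODE, with the $B^H$-increments cancelling. Fix $\omega\in\varOmega$, abbreviate $Y^{(i)}:=Y^{(k_i)}$, $\tau^{(i)}:=\tau^{(k_i)}$ for $i=1,2$, and set $T^*:=\min(\tau^{(1)},\tau^{(2)})$; since $Y_0>0$ and the paths are continuous, $T^*>0$. On $[0,T^*)$ both processes obey the integral form of \eqref{new sde}, hence $Z_t:=Y^{(2)}_t-Y^{(1)}_t$ satisfies $Z_0=0$ and, using $\frac{k_2}{Y^{(2)}_s}-\frac{k_1}{Y^{(1)}_s}=\frac{k_2-k_1}{Y^{(1)}_s}-\frac{k_2 Z_s}{Y^{(1)}_s Y^{(2)}_s}$,
\[
Z_t=\frac{1}{2}\int_0^t\left(\frac{k_2-k_1}{Y^{(1)}_s}-\left(\frac{k_2}{Y^{(1)}_sY^{(2)}_s}+a\right)Z_s\right)ds,\qquad 0\le t<T^*.
\]
In particular $Z$ is $C^1$ on $[0,T^*)$ and solves $Z'=-\beta Z+\gamma$ with $\beta,\gamma$ continuous on $[0,T^*)$ and $\gamma(s)=\frac{k_2-k_1}{2Y^{(1)}_s}>0$.

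By the variation-of-constants formula,
\[
Z_t=\int_0^t\exp\left(-\int_s^t\left(\frac{k_2}{2Y^{(1)}_uY^{(2)}_u}+\frac{a}{2}\right)du\right)\frac{k_2-k_1}{2Y^{(1)}_s}\,ds,
\]
where only finite integrals over compact subintervals of $[0,T^*)$ enter. Each exponential factor is strictly positive and $\gamma>0$, so $Z_t>0$, i.e. $Y^{(1)}_t<Y^{(2)}_t$, for every $t\in(0,T^*)$. (Alternatively, one avoids the explicit formula: $Z_0=0$ with $Z_0'=\gamma(0)>0$ forces $Z>0$ on a right-neighbourhood of $0$, and at any first later zero $t_1\in(0,T^*)$ one would need $Z'(t_1)\le0$, contradicting $Z'(t_1)=\gamma(t_1)>0$.)

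To prove (i), suppose for contradiction that $\tau^{(2)}<\tau^{(1)}$, so $T^*=\tau^{(2)}$. Then $Z_t>0$ on $(0,\tau^{(2)})$; since both $Y^{(1)}$ and $Y^{(2)}$ have continuous trajectories on $[0,\infty)$, letting $t\uparrow\tau^{(2)}$ gives $0=Y^{(2)}_{\tau^{(2)}}=\lim Y^{(2)}_t\ge\lim Y^{(1)}_t=Y^{(1)}_{\tau^{(2)}}$, which contradicts $Y^{(1)}_{\tau^{(2)}}>0$ (valid because $\tau^{(2)}<\tau^{(1)}$). Hence $\tau^{(1)}\le\tau^{(2)}$.

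For (ii), now $T^*=\tau^{(1)}$, and the step above yields $Y^{(1)}_t<Y^{(2)}_t$ for $t\in(0,\tau^{(1)})$, while $Y^{(1)}_0=Y^{(2)}_0=Y_0$. For $t\in[\tau^{(1)},\tau^{(2)})$ one has $Y^{(1)}_t=0<Y^{(2)}_t$, and for $t\ge\tau^{(2)}$ both vanish; assembling the ranges gives $Y^{(1)}_t\le Y^{(2)}_t$ for all $t\ge0$, with strict inequality on $(0,\tau^{(2)})$ (the only point where it could fail inside that interval is $t=\tau^{(1)}=\tau^{(2)}$, which is excluded from the open interval). I expect the genuinely delicate part of the argument to be not any estimate but the endpoint bookkeeping: establishing the $C^1$ property of $Z$ only on the open set where both processes stay bounded away from zero (where $\beta$ may blow up as $t\uparrow T^*$), and then transferring the strict inequality to the closure by continuity to run the contradiction in (i).
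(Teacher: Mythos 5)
Your proof is correct, and its skeleton matches the paper's: fix $\omega$, work with the difference of the two solutions on $[0,\min(\tau^{(k_1)},\tau^{(k_2)}))$, show it is strictly positive there, then get (i) by contradiction at $\tau^{(k_2)}$ and assemble (ii) piecewise. The genuine difference is in how the strict positivity is established. The paper never writes a linear equation for the difference: it only notes that $\delta=Y^{(k_2)}-Y^{(k_1)}$ is differentiable with $\delta_+'(0)=\frac{k_2-k_1}{2Y_0}>0$, takes the maximal interval $(0,t^*)$ of positivity, and derives a contradiction from $\delta(t^*)=0$, $\delta'(t^*)=\frac{k_2-k_1}{2Y^*}>0$ (which would force $\delta<0$ just before $t^*$) — this is exactly your parenthetical ``first later zero'' alternative. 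Your main route instead decomposes the drift difference as $\frac{k_2-k_1}{Y^{(1)}_s}-\bigl(\frac{k_2}{Y^{(1)}_sY^{(2)}_s}+a\bigr)Z_s$ (valid since the $B^H$ and $\sigma$ terms cancel) and applies variation of constants, giving an explicit strictly positive integral representation of $Z$. That buys a quantitative lower bound on the gap and avoids the maximal-interval bookkeeping, at the mild cost of invoking uniqueness/the integrating-factor formula for a linear ODE whose coefficient $\beta$ may blow up at $T^*$ — harmless, as you note, since only compact subintervals of $[0,T^*)$ are used. Your passage to the limit $t\uparrow\tau^{(k_2)}$ in (i) is also a slightly cleaner phrasing of the paper's ``there exists $t_*<\tau^{(k_2)}$ with $Y^{(k_2)}<Y^{(k_1)}$ near $\tau^{(k_2)}$'' step; the endpoint bookkeeping in (ii), including the case $\tau^{(k_1)}=\tau^{(k_2)}$, is handled the same way in both.
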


\begin{remark}
This lemma holds for an arbitrary Hurst index $H\in(0,1)$.
\end{remark}

\begin{proof}
Let $\omega\in\varOmega$ be fixed (we will omit $\omega$ in brackets in
further formulas). Consider the function $\delta$ on the interval $[0,
\min\{\tau^{(k_1)},\tau^{(k_2)}\})$, such that $\delta
(t)=Y^{(k_2)}_t-Y^{(k_1)}_t$. It is obvious that $\delta$ is
differentiable, $\delta(0)=0$ and
\begin{equation*}
\delta_+'(0)=\frac{1}{2} \biggl(\frac{k_2}{Y_0}-aY_0
\biggr)-\frac
{1}{2} \biggl(\frac{k_1}{Y_0}-aY_0 \biggr)=
\frac{k_2-k_1}{2Y_0}>0.
\end{equation*}

As $\delta(t)=\delta_+'(0)t+o(t)$, $t\rightarrow0+$, it is easy to see
that there exists the maximal interval $(0,t^*)\subset(0, \min\{\tau
^{(k_1)},\tau^{(k_2)}\})$ such that $\delta(t)>0$ for all $t\in
(0,t^*)$. It is also clear that
\begin{equation*}
t^*=\sup \bigl\{t\in \bigl(0, \min \bigl\{\tau^{(k_1)},
\tau^{(k_2)} \bigr\} \bigr) \mid\forall s\in(0,t): \delta(s)>0 \bigr\}.
\end{equation*}

Assume that $t^*< \min\{\tau^{(k_1)},\tau^{(k_2)}\}$.
According to the definition of $t^*$, $\delta(t^*)=0$. Hence
$Y^{(k_2)}_{t^*}=Y^{(k_1)}_{t^*}=Y^*>0$ and
\begin{equation*}
\delta' \bigl(t^* \bigr)=\frac{k_2-k_1}{2Y^*}>0.
\end{equation*}

As $\delta(t)=\delta'(t^*)(t-t^*)+o(t-t^*)$, $t\rightarrow t^*$, there
exists $\varepsilon>0$ such that $\delta(t)<0$ for all $t\in
(t^*-\varepsilon, t^*)$, that contradicts the definition of $t^*$.

Therefore, $\forall t\in(0, \min\{\tau^{(k_1)},\tau^{(k_2)}\})$:
\begin{equation}
\label{comp ineq} Y^{(k_2)}_t>Y^{(k_1)}_t.
\end{equation}

Now it is easy to show that $(i)$ holds: indeed, if $\tau^{(k_1)}>\tau
^{(k_2)}$, then
\[
0=Y^{(k_2)}_{\tau^{(k_2)}}<Y^{(k_1)}_{\tau^{(k_2)}}.
\]

This means that $\exists t_*<\tau^{(k_2)}$ such that
$Y^{(k_2)}_t<Y^{(k_1)}_t$ for all $t\in(t_*,\tau^{(k_2)})$, which
contradicts \eqref{comp ineq}.

Finally, as $Y^{(k_2)}_t>Y^{(k_1)}_t$ for all $t\in(0, \tau^{(k_1)})$,
$Y^{(k_2)}_t>Y^{(k_1)}_t=0$ for all $t\in[\tau^{(k_1)}, \tau^{(k_2)})$
and $Y^{(k_2)}_t=Y^{(k_1)}_t=0$ for all $t\ge\tau^{(k_2)}$, $(ii)$
also holds.
\end{proof}

Now let us move to the main result of the section.
\begin{thm}\label{less main result} For all $T>0$:
\begin{equation}
\mathbb{P} \bigl(\tau^{(k)}>T \bigr)\rightarrow1,\quad k\rightarrow
\infty.
\end{equation}
\end{thm}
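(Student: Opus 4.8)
The plan is to convert this probabilistic assertion into an almost sure one by means of the comparison Lemma~\ref{comparison thm}, and then to rerun, with the large parameter $k$ now playing the role that the small parameter $1/\varepsilon$ played in the proof of Theorem~\ref{main result}, the H\"older-regularity estimate from Section~\ref{sec3}. First, by Lemma~\ref{comparison thm}(i) the map $k\mapsto\tau^{(k)}(\omega)$ is nondecreasing for every $\omega$, so the events $\{\tau^{(k)}>T\}$ increase with $k$, $\mathbb P(\tau^{(k)}>T)$ is nondecreasing in $k$, and
\[
\lim_{k\to\infty}\mathbb P \bigl(\tau^{(k)}>T \bigr)
=\mathbb P \Bigl(\bigcup_{n\ge1} \bigl\{\tau^{(n)}>T \bigr\} \Bigr)
=\mathbb P \Bigl(\sup_{k>0}\tau^{(k)}>T \Bigr).
\]
Hence it suffices to prove that $\sup_{k>0}\tau^{(k)}>T$ almost surely, i.e.\ that for a.e.\ $\omega$ there exists a (possibly $\omega$-dependent) value of $k$ for which the trajectory $Y^{(k)}$ does not reach $0$ on $[0,T]$.

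To prove the latter I would argue by contradiction, copying the scheme of Theorem~\ref{main result}. Fix $\omega\in\varOmega'$ with $\varOmega'$ as in Proposition~\ref{fBm property}, fix $\delta\in(0,H)$, fix an arbitrary $\varepsilon\in(0,Y_0)$, and let $C=C(T,\omega,\delta)$ be the corresponding H\"older constant. Suppose $\tau^{(k)}:=\tau^{(k)}(\omega)\le T$. As before, introduce $\tau_\varepsilon:=\sup\{t\in(0,\tau^{(k)}):Y^{(k)}_t=\varepsilon\}$; it is well defined because $Y^{(k)}$ is continuous on $[0,\tau^{(k)}]$, $Y^{(k)}_0=Y_0>\varepsilon$ and $Y^{(k)}_{\tau^{(k)}}=0$, and one has $Y^{(k)}_s\in(0,\varepsilon)$ on $(\tau_\varepsilon,\tau^{(k)})$. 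For $k$ large enough (no condition is needed when $a\le0$, by \eqref{negative a}, and it suffices that $k>2a\varepsilon^2$ when $a>0$, by \eqref{positive a}) this gives $\frac{k}{Y^{(k)}_s}-aY^{(k)}_s\ge\frac{k}{2\varepsilon}$ on that interval, so that, writing $u:=\tau^{(k)}-\tau_\varepsilon\in(0,T]$ and invoking Proposition~\ref{fBm property},
\[
-\varepsilon
=Y^{(k)}_{\tau^{(k)}}-Y^{(k)}_{\tau_\varepsilon}
=\frac12\int_{\tau_\varepsilon}^{\tau^{(k)}}\biggl(\frac{k}{Y^{(k)}_s}-aY^{(k)}_s\biggr)ds+\frac{\sigma}{2}\bigl(B^H_{\tau^{(k)}}-B^H_{\tau_\varepsilon}\bigr)
\ge\frac{k}{4\varepsilon}u-\frac{\sigma}{2}Cu^{H-\delta},
\]
which is the inequality $F_\varepsilon(u)\le0$ for $F_\varepsilon(x):=\frac{k}{4\varepsilon}x-\frac{\sigma}{2}Cx^{H-\delta}+\varepsilon$.

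The key step is then to show that $F_\varepsilon(x)>0$ for all $x>0$ as soon as $k$ is large enough. In contrast with Section~\ref{sec3}, here $\varepsilon$ is kept fixed and we exploit that $k\to\infty$: on the range $0<x\le x_1:=(2\varepsilon/(\sigma C))^{1/(H-\delta)}$ we have $\frac{\sigma}{2}Cx^{H-\delta}\le\varepsilon$, whence $F_\varepsilon(x)\ge\frac{k}{4\varepsilon}x>0$; for $x>x_1$, since $0<H-\delta<1$ the power $x\mapsto\frac{\sigma}{2}Cx^{H-\delta}$ is dominated by the line $x\mapsto\frac{k}{4\varepsilon}x$ as soon as $\frac{k}{4\varepsilon}\ge\frac{\sigma}{2}Cx_1^{H-\delta-1}$, and the latter holds for every $k$ exceeding an explicit threshold $k_0=k_0(T,\omega,\delta,\varepsilon,a,\sigma)$. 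For any integer $k\ge k_0(\omega)$ this contradicts $F_\varepsilon(u)\le0$, forcing $\tau^{(k)}(\omega)>T$; hence $\sup_{k>0}\tau^{(k)}(\omega)>T$ on the full-measure set $\varOmega'$ (running the same argument for $T$ ranging over $\mathbb N$ even gives $\sup_{k>0}\tau^{(k)}=\infty$ a.s.), which together with the first paragraph proves the theorem.

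I expect the only genuinely subtle point to be the reduction in the first paragraph, together with the observation that it does no harm for the threshold $k_0$ to be random — it is finite a.s.\ precisely because the H\"older constant $C$ is. Once this is granted, what remains is the monotone passage to the limit supplied by Lemma~\ref{comparison thm} and an elementary rerun of the Section~\ref{sec3} estimate. A minor technical point, inherited from the setting of Theorem~1, is the continuity of $Y^{(k)}$ up to time $\tau^{(k)}$ that is used to define $\tau_\varepsilon$ and to guarantee $Y^{(k)}_s\in(0,\varepsilon)$ on $(\tau_\varepsilon,\tau^{(k)})$.
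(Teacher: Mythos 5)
Your proof is correct, and it reaches the conclusion by a slightly different organization of the limit passage than the paper. The paper argues by contradiction at the level of probabilities: it assumes $\mathbb{P}(\tau^{(k_n)}\le T^*)\to\alpha>0$ along some $k_n\uparrow\infty$, uses Lemma~\ref{comparison thm} to get the nested events $D^{(k_n)}_{T^*}$ whose intersection has probability $\alpha$, and then, for $\omega$ in that intersection, derives from the same H\"older/drift estimate two inequalities ($\frac{\sigma}{2}C_\omega u_n^{H-\delta}>\varepsilon$ and $\frac{\sigma}{2}C_\omega u_n^{H-\delta}>\frac{k_n}{4\varepsilon}u_n$ with $u_n=\tau^{(k_n)}-\tau^{(k_n)}_\varepsilon$) that become incompatible for large $k_n$ once one restricts to a positive-probability set $E$ on which $C_\omega\le M$ (finiteness of $C_\omega$ being justified there via the moment bounds of Nualart--Ra\c{s}canu). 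You instead use Lemma~\ref{comparison thm}(i) to identify $\lim_k\mathbb{P}(\tau^{(k)}>T)$ with $\mathbb{P}(\sup_k\tau^{(k)}>T)$ and then prove the pathwise statement that for every $\omega\in\varOmega'$ there is a finite random threshold $k_0(\omega)$ beyond which $F_\varepsilon(x)=\frac{k}{4\varepsilon}x-\frac{\sigma}{2}Cx^{H-\delta}+\varepsilon$ is strictly positive for all $x>0$, which rules out $\tau^{(k)}\le T$; your verification that fixing $\varepsilon$ and sending $k\to\infty$ makes $F_\varepsilon$ positive (splitting at $x_1=(2\varepsilon/(\sigma C))^{1/(H-\delta)}$) is sound, as is the reduction in your first paragraph. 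The two routes rest on exactly the same two ingredients (the comparison lemma and Proposition~\ref{fBm property}); yours avoids the subsequence/contradiction bookkeeping and the truncation of $C_\omega$, needs only a.s.\ finiteness of the H\"older constant rather than its moments, and in fact gives the slightly stronger conclusion that a.s.\ $\tau^{(k)}(\omega)>T$ for all sufficiently large (random) $k$, while the paper's version stays entirely at the level of the probabilities appearing in the statement.
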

\begin{proof}
The proof is by contradiction.

Assume that $\exists T^*>0$, $\exists\{k_n, n\ge1\}, k_n\uparrow
\infty
$ as $n\rightarrow\infty$ such that:
\begin{equation*}
\mathbb{P} \bigl(\tau^{(k_n)}\le T^* \bigr)\rightarrow\alpha>0, \quad n
\rightarrow\infty.
\end{equation*}

Let us consider the case of $a>0$. Let $\varOmega'$ be from Proposition
\ref{fBm property}, and for all $\varepsilon\in(0,\min(Y_0, 1,
\sqrt
{\frac{k_1}{2a}}))$ denote $\tau^{(k_n)}_\varepsilon:=\sup\{t\in
(0,\tau
):Y^{(k_n)}_t=\varepsilon\}$ and
\begin{equation*}
\begin{gathered} D^{(k_n)}_{T^*}:= \bigl\{\omega\in
\varOmega'\mid\tau^{(k_n)}\le T^* \bigr\}. \end{gathered}
\end{equation*}

According to Lemma~\ref{comparison thm}, $\forall n\ge1:
D^{(k_{n+1})}_{T^*}\subset D^{(k_{n})}_{T^*}$, so
\begin{equation*}
\mathbb{P} \biggl(\bigcap_{n\ge1}D^{(k_n)}_{T^*}
\biggr)=\lim_{n\rightarrow
\infty}\mathbb{P} \bigl(D^{(k_n)}_{T^*}
\bigr)=\alpha>0.
\end{equation*}

Just like in Theorem~\ref{main result}, $\forall n\ge1$, $\forall
\omega
\in D^{(k_n)}_{T^*}$:
\begin{equation*}
\begin{gathered} -\varepsilon=Y^{(k_n)}_{\tau^{(k_n)}}-Y^{(k_n)}_{\tau
^{(k_n)}_\varepsilon}=
\frac{1}{2}\int_{\tau^{(k_n)}_\varepsilon
}^{\tau
^{(k_n)}} \biggl(
\frac{k_n}{Y^{(k_n)}_s}-aY^{(k_n)}_s \biggr)ds+\frac
{\sigma}{2}
\bigl(B_{\tau^{(k_n)}}^H-B_{{\tau^{(k_n)}_\varepsilon
}}^H \bigr).
\end{gathered} %
\end{equation*}

The process $Y^{(k_n)}_s\in(0,\varepsilon)$ on the interval $(\tau
^{(k_n)}_\varepsilon,\tau^{(k_n)})$, hence
\begin{equation}
\label{positive a2} %
\begin{gathered} \frac{k_n}{Y^{(k_n)}_s}-aY^{(k_n)}_s
\ge\frac{k_n}{\varepsilon
}-a\varepsilon, \quad\forall s\in \bigl(\tau^{(k_n)}_\varepsilon,
\tau^{(k_n)} \bigr). \end{gathered} %
\end{equation}

Let $\delta>0$ satisfy the condition $0<H-\delta<1/2$.

According to Proposition~\ref{fBm property}, $\exists C_\omega
=C(T^*,\omega,\delta)$, $\forall0<s<t<T^*$:
\begin{equation*}
\bigl\llvert B_t^H-B_s^H \bigr
\rrvert\le C_\omega\llvert t-s\rrvert^{H-\delta}.
\end{equation*}

As $\varepsilon< \sqrt{\frac{k_1}{2a}}$, the following inequality is true:
\begin{equation*}
\begin{gathered} \frac{k_n}{\varepsilon}-a\varepsilon>\frac
{k_n}{2\varepsilon}
\quad\forall n\ge0, \end{gathered} %
\end{equation*}
so just like in the proof of Theorem~\ref{main result} we can obtain
that $\forall n\ge1$, $\forall\omega\in D^{(k_n)}_{T^*}$:
\begin{equation}
\label{on the one hand} \frac{\sigma}{2}C_\omega \bigl(\tau^{(k_n)}-
\tau^{(k_n)}_\varepsilon \bigr)^{H-\delta}\ge\frac{k_n}{4\varepsilon}
\bigl(\tau^{(k_n)}-\tau^{(k_n)}_\varepsilon \bigr)+\varepsilon.
\end{equation}
According to \eqref{on the one hand}, $\forall n\ge1$, $\forall
\omega
\in\bigcap_{n\ge0}D^{(k_n)}_{T^*}$:
\begin{equation}
\label{contradiction} \frac{\sigma}{2}C_\omega \bigl(\tau^{(k_n)}-
\tau^{(k_n)}_\varepsilon \bigr)^{H-\delta}>\varepsilon.
\end{equation}
However, it is easy to see from \eqref{on the one hand} that
\begingroup
\abovedisplayskip=4pt
\belowdisplayskip=4pt
\begin{equation}
\begin{gathered}\label{on the other hand} \frac{\sigma}{2}C_\omega
\bigl(\tau^{(k_n)}-\tau^{(k_n)}_\varepsilon \bigr)^{H-\delta}>
\frac{k_n}{4\varepsilon} \bigl(\tau^{(k_n)}-\tau^{(k_n)}_\varepsilon
\bigr). \end{gathered} %
\end{equation}

Let us transform \eqref{on the other hand}:
\begin{align*}
\frac{\sigma}{2}C_\omega&>\frac{k_n}{4\varepsilon}\bigl(\tau^{(k_n)}-\tau^{(k_n)}_\varepsilon\bigr)^{1-H+\delta},\\
\frac{2\sigma\varepsilon}{k_n}C_\omega&> \bigl(\tau^{(k_n)}-\tau^{(k_n)}_\varepsilon \bigr)^{1-H+\delta},\\
\biggl(\frac{2\sigma\varepsilon}{k_n}C_\omega \biggr)^{\frac{1}{1-H+\delta}}&>\tau^{(k_n)}-\tau^{(k_n)}_\varepsilon,
\end{align*}
hence
\begin{align*}
\frac{\sigma}{2}C_\omega \bigl(\tau^{(k_n)}-\tau^{(k_n)}_\varepsilon \bigr)^{H-\delta}&<\frac{\sigma}{2}C_\omega \biggl(\frac{2\sigma\varepsilon}{k_n}C_\omega\biggr)^{\frac{H-\delta}{1-H+\delta}}\\
&= \bigl(2^{\frac{2H-2\delta-1}{1-H+\delta}}\sigma^{\frac{1}{1-H+\delta}} \bigr)k_n^{-\frac{H-\delta}{1-H+\delta}}C_\omega^{\frac{1}{1-H+\delta}}\varepsilon^{\frac{H-\delta}{1-H+\delta}}\\
&=\tilde{C}k_n^{-\frac{H-\delta}{1-H+\delta}}C_\omega^{\frac{1}{1-H+\delta}}\varepsilon^{\frac{H-\delta}{1-H+\delta}}.
\end{align*}

According to \cite{Nual-Rasc}, \querymark{Q2}$\mathbb{E} (\llvert
C_\omega\rrvert^p)<\infty$ for all $p\in[1,\infty)$, so $C_\omega$ is finite a.s.

Therefore, as $\mathbb{P} (\bigcap_{n\ge0}D^{(k_n)}_{T^*}
)=\alpha>0$, $\exists M>0$, $\exists E \subset\bigcap_{n\ge
0}D^{(k_n)}_{T^*}$, $\mathbb{P}(E)>0$ such that $\forall\omega\in E$:
\begin{equation*}
C_\omega< M.
\end{equation*}

Hence, as $\varepsilon<1$,
\begin{align*}
\frac{\sigma}{2}C_\omega \bigl(\tau^{(k_n)}-\tau^{(k_n)}_\varepsilon \bigr)^{H-\delta}&<\tilde{C}k_n^{-\frac{H-\delta}{1-H+\delta}}C_\omega^{\frac{1}{1-H+\delta}}\varepsilon^{\frac{H-\delta}{1-H+\delta}}\\
&<\tilde{C}k_n^{-\frac{H-\delta}{1-H+\delta}}M^{\frac{1}{1-H+\delta}}<\varepsilon,
\end{align*}
if $k_n> (\frac{\tilde{C}M^{\frac{1}{1-H+\delta}}}{\varepsilon
} )^{\frac{1-H+\delta}{H-\delta}}$, which contradicts \eqref
{contradiction}.

If $a<0$, the following inequality can be used instead of \eqref
{positive a2}:
\begin{equation*}
\frac{k_n}{Y^{(k_n)}_s}-aY^{(k_n)}_s\ge\frac{k_n}{\varepsilon}.\qedhere
\end{equation*}
\end{proof}
\endgroup

\appendix
\section{Appendix: Simulations of the fractional Cox--Ingersoll--Ross
process}\label{append}

Theorems~\ref{main result} and~\ref{less main result} can be
illustrated by numerical simulations.

10000 sample paths of the fractional Cox--Ingersoll--Ross
process were simulated on the interval $[0,10]$ as the square of the
process $Y$ defined in \eqref{new sde}. The Euler approximation of $Y$
was used until the first zero hitting by the latter with the mesh of
the partition of $\Delta t=0.001$:
\begin{align*}
Y_{t_n}&= %
\begin{cases}
Y_{t_{n-1}}+\frac{1}{2} (\frac
{k}{Y_{t_{n-1}}}-aY_{t_{n-1}}
)\Delta t+\frac{\sigma}{2}\Delta B_{t_n}^H, &\text{if $Y_{t_{n-1}}>0$},
\\
0, &\text{if $Y_{t_{n-1}}\le0$},
\end{cases}
\\
X_{t_n}&=Y^2_{t_n}.
\end{align*}

There were no zero hitting for 10000 trajectories simulated for four cases
that satisfy the conditions of Theorem~\ref{main result} (see Fig.~\ref{fig1}, \ref{fig2}; the amount of trajectories on these and further
figures is reduced in order to make them more convenient for the reader).\looseness=1

\begin{figure}[h]
\centering
\begin{minipage}[b]{0.49\textwidth}
\includegraphics{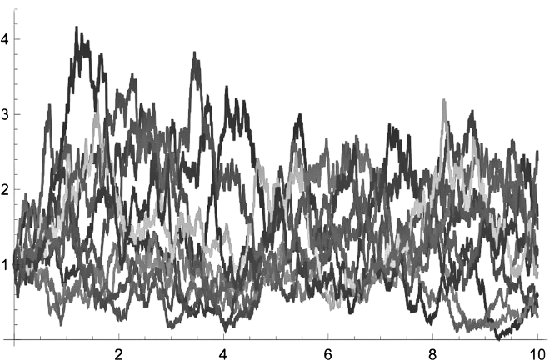}
\caption{Case of $a=1$, $ k=1$, $\sigma=1$, $H=0.6$, $X_0=1$}\label{fig1}
\end{minipage}
\figend
\begin{minipage}[b]{0.49\textwidth}
\includegraphics{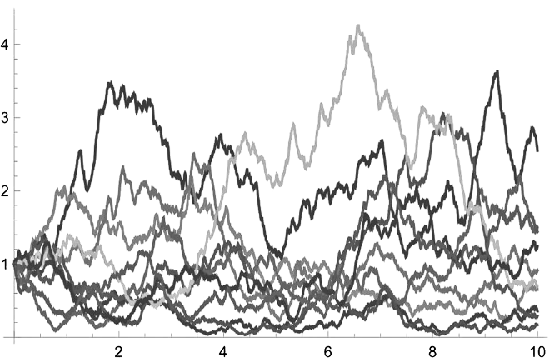}
\caption{Case of $a=1$, $ k=1$, $\sigma=1$, $H=0.8$, $X_0=1$}\label{fig2}
\end{minipage}
\end{figure}

\begin{figure}[b!]
\centering
\begin{minipage}[b]{0.49\textwidth}
\includegraphics{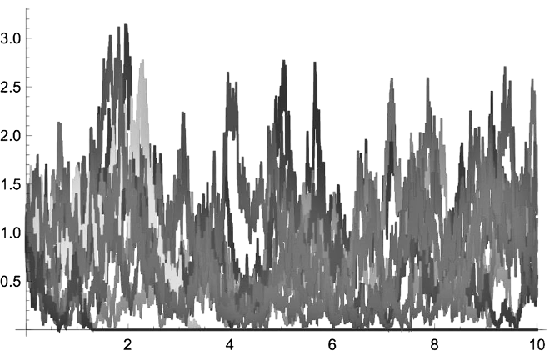}
\caption{Case of $a=1$, $k=0.5$, $\sigma=1$, $H=0.4$, $X_0=1$. 17\% of
sample paths hit zero}\label{fig5}
\end{minipage}
\figend
\begin{minipage}[b]{0.49\textwidth}
\includegraphics{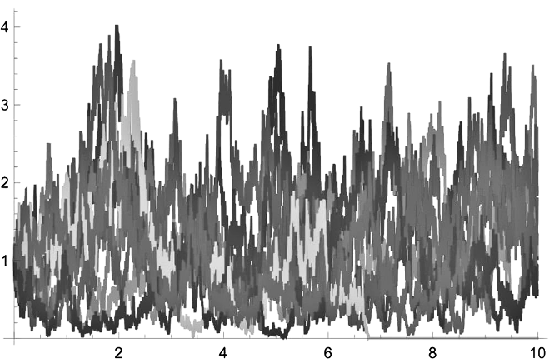}
\caption{Case of $a=1$, $ k=1$, $\sigma=1$, $H=0.4$, $X_0=1$. Less than
1\% of
sample paths hit zero}\label{fig6}
\end{minipage}
\end{figure}

\begin{figure}[b!]
\centering
\begin{minipage}[b]{0.49\textwidth}
\includegraphics{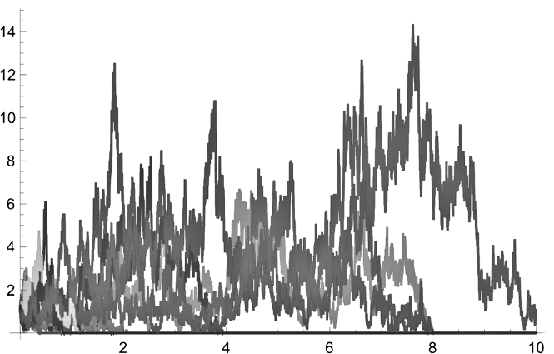}
\caption{Case of $a=1$, $k=1$, $\sigma=2$, $H=0.4$, $X_0=1$. 86\% of
sample paths hit zero}\label{fig7}
\end{minipage}
\figend
\begin{minipage}[b]{0.49\textwidth}
\includegraphics{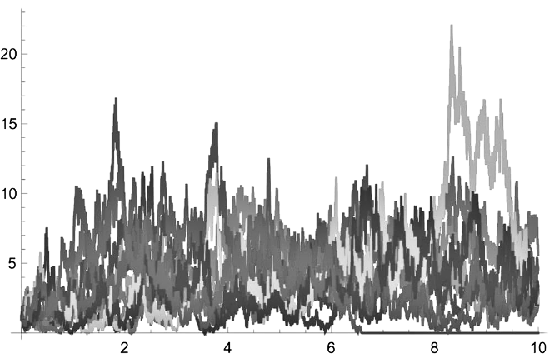}
\caption{Case of $a=1$, $ k=3$, $\sigma=2$, $H=0.4$, $X_0=1$. Less than
1\% of
sample paths hit zero}\label{fig8}
\end{minipage}
\end{figure}

\begin{figure}[h!]
\centering
\begin{minipage}[b]{0.49\textwidth}
\includegraphics{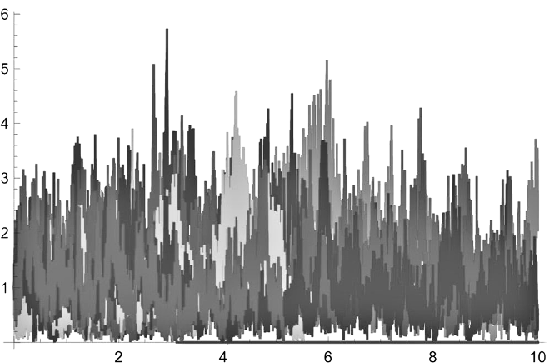}
\caption{Case of $a=1$, $k=1$, $\sigma=1$, $H=0.2$, $X_0=1$. 43\% of
sample paths hit zero}\label{fig9}
\end{minipage}
\figend
\begin{minipage}[b]{0.49\textwidth}
\includegraphics{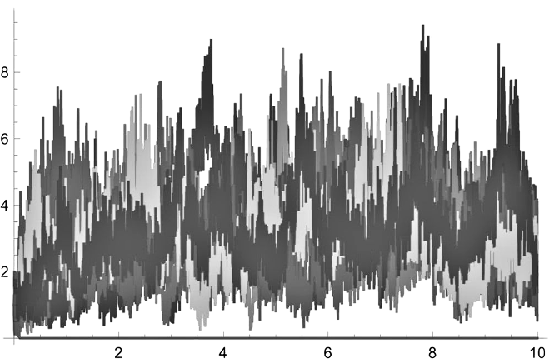}
\caption{Case of $a=1$, $ k=3$, $\sigma=1$, $H=0.2$, $X_0=1$. Less than
1\% of
sample paths hit zero}\label{fig10}
\end{minipage}
\end{figure}

However, the \xch{behavior}{behaviour} of the fractional Cox--Ingersoll--Ross
process is not completely clear for the situation of $k>0, H<1/2$.
Simulations for different parameters $k, H, \sigma$ (see Figures~\ref
{fig5}--\ref{fig10}) show that in this case the process may hit zero
with positive probability, however, as stated in Theorem~\ref{less main
result}, as $k$ gets bigger, the amount of trajectories that hit zero
tends to zero. Moreover, it seems that the less $H$ and the bigger
$\sigma$ are, the bigger is the probability of reaching zero.





%
\end{document}